\newtheorem{theorem}{Theorem}[section]
\newtheorem{lemma}[theorem]{Lemma}
\newtheorem{corollary}[theorem]{Corollary}
\theoremstyle{definition}
\newtheorem{definition}[theorem]{Definition}
\theoremstyle{remark}
\newtheorem{remark}[theorem]{Remark}
\numberwithin{equation}{section}
\begin{document}
\title[A New Class of Operator Monotone Functions via Operator Means ] {A New Class of Operator Monotone Functions via Operator Means}
\author[R. Pal, M. Singh, M.S. Moslehian, J.S. Aujla]{Rajinder Pal$^1$ , Mandeep Singh$^1$  , Mohammad Sal Moslehian$^*$$^2$ and Jaspal Singh Aujla$^3$}
\address{$^1$ Department of Mathematics, Sant Longowal Institute of Engineering and
Technology, Longowal-148106, Punjab, India}
\email{rajinder\_singla83@yahoo.com} \email{msrawla@yahoo.com} \address{$^2$ Department of Pure Mathematics, Center of Excellence in
Analysis on Algebraic Structures (CEAAS), Ferdowsi University of
Mashhad, P. O. Box 1159, Mashhad 91775, Iran}
\email{moslehian@um.ac.ir; moslehian@member.ams.org}
\address{$^3$ Department of Mathematics, National Institute of Technology, Jalandhar-144011, Punjab, India}
\email{aujlajs@nitj.ac.in}

\subjclass[2010]{Primary 46L07, 47A63, 47A64, 47A12.}
\keywords{Operator monotone function; positive operator; positive operator; operator mean; invariance of mean.\newline \indent $^{*}$ Corresponding author}

\begin{abstract}
In this paper, we obtain a new class of functions, which is developed via the Hermite--Hadamard inequality for convex functions. The well-known one-one correspondence between the class of operator monotone functions and operator connections declares that the obtained class represents the weighted logarithmic means. We shall also consider weighted identric mean and some relationships between various operator means. Among many things, we extended the weighted arithmetic--geometric operator mean inequality as $A\#_{t}B\leq A\ell_t B\leq \frac{1}{2}(A\#_{t}B + A\nabla_{t} B)\le A\nabla_tB$ and $A\#_{t}B\leq A\mathcal{I}_t B\leq A\nabla_{t} B$ involving the considered operator means.
\end{abstract}
\maketitle

\section{Introduction and preliminaries}

In classical analysis, a function $\mathcal{M}:[0,\infty) \times [0,\infty) \to
[0,\infty)$ is called a \emph{mean} if
\begin{enumerate}
 \item $\mathcal{M}(a,b)\geq 0,$
 \item $a\leq \mathcal{M}(a,b)\leq b$ if $a\leq b,$
 \item $\mathcal{M}(a,b)=\mathcal{M}(b,a)$ (symmetry),
 \item $\mathcal{M}(a,b)$ is monotone increasing in both $a$ and $b,$
 \item $\mathcal{M}(\alpha a,\alpha b)=\alpha \mathcal{M}( a, b)$ for all $\alpha>0$, (homogeneity),
 \item $\mathcal{M}(a,b)$ is continuous in $a, \;b.$
 \end{enumerate}

Throughout this paper, the arithmetic, logarithmic, geometric, and harmonic means for positive scalars $a$ and $b$ are denoted by $A(a,b)(=\frac{a+b}{2})$, $L(a,b)(=\frac{a-b}{\log a-\log b}=\int\limits_{0}^{1}a^{t}b^{1-t}dt)$, $G(a,b)(=\sqrt{ab})$ and $H(a,b)(=(\frac{a^{-1}+b^{-1}}{2})^{-1})$, respectively. The so-called Stolarsky generalized logarithmic means $S_r(a,b)$ are defined to be $\left(\frac{a^r-b^r}{r(a-b)}\right)^{1/(r-1)}$ for $r \neq 0, 1; a, b >0, a\neq b$. It is known that $\lim_{r \to 0}S_r(a,b)=L(a,b)$, $S_2(a,b)=A(a,b)$ and $S_{-1}(a,b)=G(a,b)$ as well as $S_r$ is a continuous strictly increasing function, so
\begin{eqnarray}\label{msm1} \min\{a,b\} \leq H(a,b)\leq G(a,b)\leq L(a,b)\leq A(a,b)\leq \max\{a,b\}\,.\end{eqnarray}
Several authors studied inequality \eqref{msm1} using power means or other different techniques, see for instance \cite{FU,JS, HOA}. More generally, we consider for $t\in [0,1]$, the means $A_{t}(a,b)(=(1-t)a+tb)$, $G_{t}(a,b)(=a^{1-t}b^{t})$ and $H_{t}(a,b)(=((1-t) a^{-1}+tb^{-1})^{-1})$ as weighted arithmetic, weighted geometric and weighted harmonic means, respectively. An arbitrary weighted mean is denoted by $M_{t}(a,b)$. For such means one may trivially see that the symmetric property (3) may be lost. Again, it is easy to observe that five properties out of the above said six properties are satisfied by $M_{t}(a,b)$, while (3) is replaced by $M_{t}(a,b)=M_{1-t}(b,a)$.

 We now directly switch over to the operators and their means which are especially of great importance in several branches of sciences. \noindent
Let $\mathbb{B}(\mathscr{H})$ denote the algebra of all bounded
linear operators on a complex separable Hilbert space
$( \mathscr{H},\langle \cdot,\cdot\rangle)$ with the identity $I$. The cone of positive
operators is denoted by $\mathbb{B}(\mathscr{H})_{+}.$
For selfadjoint operators $A,B \in
\mathscr{B}(\mathscr{H})$, by $A\geq B$, we mean $A-B\in \mathbb{B}(\mathscr{H})_{+}.$ The operator means were first studied by Anderson and Duffin \cite{A-D}. The axiomatic theory for connections and operator means for positive operators acting on Hilbert space was established by Kubo and Ando \cite{KA}.
 A binary operation $\sigma: {\mathbb B}({\mathscr H})_+ \times {\mathbb B}({\mathscr H})_+ \to {\mathbb B}({\mathscr H})_+$ is called an \emph{operator mean} provided that

 (i) $A\leq C$ and $B\leq D$ imply $A\sigma B\leq C\sigma D$;

 (ii) $C^*(A\sigma B)C\leq(C^*AC)\sigma(C^*BC)$;

 (iii) $A_n\downarrow A$ and $B_n\downarrow B$ imply $(A_n\sigma B_n)\downarrow A\sigma B$, where $A_n\downarrow A$ means that $A_1\geq A_2\geq \cdots$ and $A_n\to A$ in the strong operator topology as $n\to \infty$;

 (iv) $I\sigma I=I$.\\
There exists an affine order isomorphism between the class of connections and the class of positive operator monotone functions $f$ defined on $[0,\infty)$ via $f(x)I=I\sigma (xI)\,\,(x\geq 0)$. The operator monotone function $f$ is called the \emph{representing function} of $\sigma$. Moreover, the map $\sigma \leftrightarrow f$ preserves orders in the sense that
$$A\sigma_1 B \leq A \sigma_2 B\quad (A,B \in {\mathbb B}({\mathscr H})_+) \Longleftrightarrow f_{\sigma_1}(x) \leq f_{\sigma_2}(x)\,\,\,(x\geq 0)\,.$$
The operator means corresponding to the positive operator monotone functions $(1-t)x+t$, $((1-t)x^{-1}+t)^{-1}$ and $x^{t}$, are the weighted operator arithmetic mean $A\nabla_t B=(1-t)A+tB$, the weighted operator harmonic mean $A!_tB=(((1-t)A^{-1}+tB^{-1})^{-1}$ and the weighted operator geometric mean $A\#_tB=A^{\frac{1}{2}}\left(A^{\frac{-1}{2}}BA^{\frac{-1}{2}}\right)^{t}A^{\frac{1}{2}}$, respectively. When $t=1/2$, we remove the index $t$. For more details on operator monotone functions the reader may be referred to \cite{seo, FJPS}. We shall denote as usual the operator weighted arithmetic, weighted logarithmic, weighted geometric and weighted harmonic means by $\nabla _t$, $\ell _t$, $\# _t$ and $!_t$, respectively.
Again when $t=1/2$ we remove the index $t.$

A mean $\mathcal{K}$ is called \emph{invariant} with respect to means $\mathcal{M}, \mathcal{N}$ if $\mathcal{K}(\mathcal{M}(a,b),\mathcal{N}(a,b))=\mathcal{K}(a,b)$ for all $a,b$. In the particular case where
$\mathcal{K}$ is the arithmetic mean, the equation above gives rise to the
so-called Sat\^{o}--Matkowski equation $\mathcal{M}(a,b)+\mathcal{N}(a,b)=a+b$ (see \cite{MAT1}). Another significant example is
\begin{eqnarray}\label{GAHG}
G(A(a,b),H(a,b))=G(a,b)\,.
\end{eqnarray}
In this paper, we shall first prove a new and generalized version of Hermite--Hadamard-inequality for convex integrable functions. Then, we give the notions and identify weighted versions of logarithmic mean $(L_t(a,b))$ for positive real numbers $a$ and $b$ and $t\in [0,1].$ We then prove a weighted version of inequality (1.1). This all is included in Section 2.

In Section 3, we shall concentrate on corresponding operator monotone functions and provide the possible inequalities in operator versions of the weighted means by showing the operator settings of (1.1) as
\begin{eqnarray}\label{4} A!_{t}B\leq A\#_{t}B\leq A\ell_t B \leq A\nabla_{t} B,\end{eqnarray}
for all positive invertible operators $A, B \in\mathbb{B}(\mathscr{H})_+$ and $t\in [0,1]$ being the weight. In Section 4 we shall consider weighted identric mean. Finally, in last section, we investigate the invariance of operator means, establish an operator version of equation \eqref{GAHG} and present some relationships between various operator means.

\section{ Weighted Logarithmic Mean}

The classical Hermite--Hadamard inequality provides estimates of the mean value
of a continuous convex function $f:[a, b] \rightarrow \mathbb{R},$
$$f\left(\frac{a+b}{2}\right)\leq\frac{1}{b-a} \int_a^{b}f(x)dx\leq \frac{f(a)+f(b)}{2}.$$
The history of this inequality begins with the papers of Hermite and J.
Hadamard in the years 1883--1893 (see, \cite{MOS1, Cp} and the references therein for some historical notes on the Hermite--Hadamard inequality). This inequality has triggered a huge amount of interest over the years. For instance
see \cite{M} for details on this topic.
An interesting problem related to the Hermite--Hadamard inequality is the precision
in this inequality. On making use of some fundamental techniques, we conclude the following refinement of it, which plays a key role in our results.

\begin{theorem}\label{14T} Let $f:[a,b]\rightarrow \mathbb{R}$ be a convex Riemann integrable function. Then
\begin{eqnarray}
\label{14} f(tb+(1-t)a)\hspace{-0.7cm}&&\leq (1-t)\int_0^{1}f(t\alpha(b-a)+a)d\alpha+t\int_0^{1}f((1-t)\alpha(b-a)+tb+(1-t)a) d\alpha\nonumber\\
&& \leq tf(b)+(1-t)f(a)
\end{eqnarray}
for all $t\in[0,1]$ .
\end{theorem}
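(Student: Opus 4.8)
The plan is to split $[a,b]$ at the weighted midpoint $c := tb+(1-t)a$ and to recognize the two integrals in the middle of \eqref{14} as the averages of $f$ over the two resulting subintervals. Indeed, the affine substitution $x = t\alpha(b-a)+a$ carries $[0,1]$ onto $[a,c]$ (with $d\alpha = dx/(c-a)$), so that $\int_0^1 f(t\alpha(b-a)+a)\,d\alpha = \frac{1}{c-a}\int_a^c f(x)\,dx$; likewise $x = (1-t)\alpha(b-a)+c$ carries $[0,1]$ onto $[c,b]$ and gives $\int_0^1 f((1-t)\alpha(b-a)+c)\,d\alpha = \frac{1}{b-c}\int_c^b f(x)\,dx$. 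Thus the middle term equals $(1-t)\,\mathrm{Avg}_{[a,c]}f + t\,\mathrm{Avg}_{[c,b]}f$, and the whole statement reduces to estimating this convex combination of subinterval averages.

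First I would apply the classical Hermite--Hadamard inequality separately on $[a,c]$ and on $[c,b]$, obtaining $f\!\left(\frac{a+c}{2}\right)\le \mathrm{Avg}_{[a,c]}f \le \frac{f(a)+f(c)}{2}$ and $f\!\left(\frac{c+b}{2}\right)\le \mathrm{Avg}_{[c,b]}f \le \frac{f(c)+f(b)}{2}$. Taking the $(1-t),t$ convex combination of the two upper bounds produces $\frac{1}{2}\big((1-t)f(a)+tf(b)+f(c)\big)$, and the desired upper estimate $tf(b)+(1-t)f(a)$ then follows by replacing $f(c)=f(tb+(1-t)a)$ with $tf(b)+(1-t)f(a)$ through convexity of $f$.

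For the lower bound I would again combine, this time the two midpoint lower bounds, to get that the middle term is at least $(1-t)f\!\left(\frac{a+c}{2}\right)+tf\!\left(\frac{c+b}{2}\right)$. The key observation is that the two midpoints satisfy $(1-t)\frac{a+c}{2}+t\frac{c+b}{2}=c$, since $(1-t)a+tb=c$ by definition of $c$; hence one further application of convexity (Jensen's inequality for the two points with weights $1-t$ and $t$) yields $(1-t)f(\frac{a+c}{2})+tf(\frac{c+b}{2})\ge f(c)=f(tb+(1-t)a)$, which is exactly the left-hand side of \eqref{14}.

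The change of variables is routine; the only genuinely delicate point is the bookkeeping that makes the two convexity applications close up — checking that the weighted midpoint $c$ is simultaneously the splitting point of the interval, the argument on the left-hand side, and the value $(1-t)\frac{a+c}{2}+t\frac{c+b}{2}$ arising in the lower bound. I would also dispatch the degenerate cases $t\in\{0,1\}$ separately, where all three quantities collapse to $f(a)$ or $f(b)$ and the inequality holds trivially, so that the substitutions (which divide by $c-a$ or $b-c$) are invoked only for $t\in(0,1)$.
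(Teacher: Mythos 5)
Your argument is correct, and it takes a genuinely different route from the paper's. You substitute $x=t\alpha(b-a)+a$ and $x=(1-t)\alpha(b-a)+c$ to recognize the middle term as $(1-t)\,\mathrm{Avg}_{[a,c]}f+t\,\mathrm{Avg}_{[c,b]}f$ with $c=tb+(1-t)a$, invoke the classical Hermite--Hadamard inequality on each subinterval, and then close both estimates with one extra use of convexity at $c$: for the upper bound you replace $f(c)$ by $(1-t)f(a)+tf(b)$, and for the lower bound you apply Jensen to the two midpoints, whose $(1-t),t$-combination is exactly $c$. All of these steps check out, including the ``crossed'' weights (the piece of length $t(b-a)$ carries weight $1-t$ and vice versa) and your handling of $t\in\{0,1\}$, where the substitutions would otherwise divide by zero. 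The paper instead argues directly under the integral signs and never invokes Hermite--Hadamard: for the upper bound it replaces each integrand by the chord of $f$ (twice), and for the lower bound it reflects $\alpha\mapsto 1-\alpha$ in the second integral and applies pointwise Jensen with weights $1-t,\,t$ to the pair of integrands, observing that the resulting convex combination of arguments is identically $tb+(1-t)a$, so the integral collapses to $f(tb+(1-t)a)$. Your version buys conceptual clarity---it exhibits the theorem as exactly classical Hermite--Hadamard on the two pieces of $[a,b]$ split at $c$, glued together by convexity, and it even yields the sharper intermediate lower bound $(1-t)f\left(\frac{a+c}{2}\right)+t f\left(\frac{c+b}{2}\right)$---at the cost of quoting the classical inequality and treating the degenerate $t$ separately. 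The paper's version is self-contained and uniform in $t\in[0,1]$, though its reflection trick is less transparent about where the inequality ``comes from.''
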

\begin{proof}
We shall first prove the last inequality in \eqref{14}, i.e.,
\begin{eqnarray}\label{15}(1-t)\int_0^{1}f(t\alpha(b-a)+a)d\alpha\hspace{-0.6cm}&&+t\int_0^{1}f((1-t)\alpha(b-a)+tb+(1-t)a) d\alpha\nonumber\\
&&\leq tf(b)+(1-t)f(a).\end{eqnarray}
In fact, the left side of \eqref{15} can be written as
\begin{align}\label{16}
&(1-t)\int_0^{1}f(t\alpha b+(1-t\alpha)a)d\alpha+t\int_0^{1}f((1-t)(\alpha(b-a)+a)+tb) d\alpha\nonumber\\
&\leq(1-t)\int_0^{1}(t\alpha f(b)+(1-t\alpha)f(a))d\alpha+t\int_0^{1}((1-t)f((\alpha(b-a)+a))+tf(b)) d\alpha\nonumber\\
&\leq(1-t)\int_0^{1}(t\alpha f(b)+(1-t\alpha)f(a))d\alpha+t\int_0^{1}((1-t)(\alpha f(b)+(1-\alpha)f(a))+tf(b)) d\alpha\nonumber\\
&=tf(b)+(1-t)f(a).
\nonumber\end{align}
For the proof of first inequality in \eqref{14}, we write
\begin{align*}
&(1-t)\int_0^{1}f(t\alpha(b-a)+a)d\alpha+t\int_0^{1}f((1-t)\alpha(b-a)+tb+(1-t)a) d\alpha\\
&=(1-t)\int_0^{1}f(t(\alpha b+(1-\alpha)a)+(1-t)a)d\alpha+t\int_0^{1}f((1-t)(\alpha b+(1-\alpha)a)+tb) d\alpha\\
&=(1-t)\int_0^{1}f(t(\alpha b+(1-\alpha)a)+(1-t)a)d\alpha+t\int_0^{1}f((1-t)((1-\alpha) b+\alpha a)+tb) d\alpha\\
&\geq\int_0^{1}f((1-t)t(\alpha b+(1-\alpha)a)+(1-t)^{2}a+t(1-t)((1-\alpha) b+\alpha a)+t^2 b) d\alpha\nonumber\\
&=f(tb+(1-t)a).
\end{align*}
\end{proof}

\begin{theorem} \label{t2.2} For $a,b\in\mathbb{R}^+$ and $t\in (0,1)$ it holds that
\begin{eqnarray}\label{6a}
a^{1-t}b^t\leq\frac{1}{\log a-\log b}\left(\frac{1-t}{t}a^{1-t}(a^t-b^t)+\frac{t}{1-t}b^t(a^{1-t}- b^{1-t})\right)
\leq (1-t)a+tb.
\end{eqnarray}
\end{theorem}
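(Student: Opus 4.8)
The plan is to recognize the middle expression in \eqref{6a} as nothing but the specialization of the refined Hermite--Hadamard inequality of Theorem \ref{14T} to the exponential function. Concretely, I would take $f(x)=e^{x}$, which is convex and (being continuous) Riemann integrable on every finite interval, and apply Theorem \ref{14T} with the generic endpoints $a,b$ there replaced by $\log a,\log b$. By the symmetry of \eqref{6a} under the exchange $(a,t)\leftrightarrow(b,1-t)$ there is no loss of generality in assuming $a<b$, so that $\log a<\log b$ and the interval is genuinely ordered; the degenerate case $a=b$ is trivial since then all three quantities collapse to $a$.

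With this choice the two outer terms fall out immediately. On the one hand $f\big(t\log b+(1-t)\log a\big)=e^{(1-t)\log a+t\log b}=a^{1-t}b^{t}$ reproduces the left-hand side of \eqref{6a}, and on the other $t f(\log b)+(1-t)f(\log a)=tb+(1-t)a$ reproduces the right-hand side. The crux is therefore to evaluate the two integrals in the middle term of \eqref{14} and show that they reassemble into the stated logarithmic-mean expression.

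For this I would use the elementary identity $\int_{0}^{1}e^{c\alpha}\,d\alpha=\frac{e^{c}-1}{c}$. Writing $e^{t\alpha(\log b-\log a)}=(b/a)^{t\alpha}$ and pulling out the constant factor $e^{\log a}=a$, the first term $(1-t)\int_{0}^{1}f\big(t\alpha(\log b-\log a)+\log a\big)\,d\alpha$ becomes $(1-t)\,a\,\frac{(b/a)^{t}-1}{t(\log b-\log a)}$; since $a\big[(b/a)^{t}-1\big]=a^{1-t}b^{t}-a$, this equals $\frac{1-t}{t}\cdot\frac{a^{1-t}b^{t}-a}{\log b-\log a}$, and recalling $\log b-\log a=-(\log a-\log b)$ together with $a-a^{1-t}b^{t}=a^{1-t}(a^{t}-b^{t})$, it matches the first summand $\frac{1}{\log a-\log b}\cdot\frac{1-t}{t}a^{1-t}(a^{t}-b^{t})$. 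Treating the second term the same way, factoring out $e^{t\log b+(1-t)\log a}=a^{1-t}b^{t}$, gives $\frac{t}{1-t}\cdot\frac{a^{1-t}b^{t}\big[(b/a)^{1-t}-1\big]}{\log b-\log a}=\frac{t}{1-t}\cdot\frac{b-a^{1-t}b^{t}}{\log b-\log a}$, which after $b-a^{1-t}b^{t}=-b^{t}(a^{1-t}-b^{1-t})$ coincides with the second summand $\frac{1}{\log a-\log b}\cdot\frac{t}{1-t}b^{t}(a^{1-t}-b^{1-t})$.

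The only genuine obstacle is bookkeeping: keeping the sign of $\log a-\log b$ straight throughout and checking that each exponential factor collapses to the correct power of $a$ and $b$. Once the two integrals are identified with the two summands in the middle of \eqref{6a}, the desired double inequality is simply the double inequality of Theorem \ref{14T} read off under this substitution, so no further argument is required.
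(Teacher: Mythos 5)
Your proposal is correct and follows essentially the same route as the paper: both apply Theorem \ref{14T} to $f(x)=e^{x}$ and evaluate the two integrals in closed form, the only cosmetic difference being that the paper works with generic endpoints and substitutes $e^{a}\mapsto a$, $e^{b}\mapsto b$ at the end, whereas you substitute the endpoints $\log a,\log b$ at the outset (and, commendably, make explicit the harmless $a<b$ normalization that the paper leaves implicit).
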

\begin{proof} On taking $f(x)=e^x$ in Theorem \ref{14T}, we obtain
\begin{eqnarray}
\label{20} e^{tb+(1-t)a}\hspace{-0.7cm}&&\leq (1-t)\int_0^{1}e^{t(b-a)\alpha+a}d\alpha+t\int_0^{1}e^{(1-t)(b-a)\alpha+tb+(1-t)a} d\alpha\nonumber\\
&& \leq te^b+(1-t)e^a.
\end{eqnarray}
Calculating the integrals in \eqref{20}, we get
\begin{eqnarray}
\label{21} \hspace{-3cm} (1-t)\int_0^{1}e^{t(b-a)\alpha+a}d\alpha+t\hspace{-0.7cm}&&\int_0^{1}e^{(1-t)(b-a)\alpha+tb+(1-t)a} d\alpha\nonumber\\
&& =\frac{1}{b-a}\left(\frac{1-t}{t}(e^{tb+(1-t)a}-e^a)+\frac{t}{1-t}(e^b-e^{tb+(1-t)a})\right).
\end{eqnarray}
Using \eqref{21} in \eqref{20} and replacing $e^a$, $e^b$ by $a$, $b$, respectively, we get the required inequality \eqref{6a}.
\end{proof}

Next we introduce the weighted logarithmic mean $L_t(a,b)$ of two positive numbers $a,b$ for $t\in (0,1)$, as
\begin{eqnarray}
\label{6b}L_t(a,b)=\frac{1}{\log a-\log b}\left(\frac{1-t}{t}a^{1-t}(a^t-b^t)+\frac{t}{1-t}b^t(a^{1-t}- b^{1-t})\right).\nonumber\end{eqnarray}
$L_0(a,b)$ and $L_1(a,b)$ are defined to be $L_0(a,b)= \lim\limits_{t\rightarrow 0}^{}L_t(a,b)$ and $L_1(a,b)=\lim\limits_{t\rightarrow 1}^{}L_t(a,b)$
We observe that $\lim\limits_{t\rightarrow 0}^{}L_t(a,b)= a$, $\lim\limits_{t\rightarrow 1}^{}L_t(a,b)=b$ and $L_{1/2}(a,b)=
L(a,b).$

Moreover, we easily see that $L_t(a,b)$ satisfies all the properties given above for a weighted mean.

To prove (1), (2), (4), (5) and (6) one needs the following equivalent expressions of $L_t(a,b)$
\begin{eqnarray}&&\hspace{-1.50cm}
\label{6c}\frac{1}{\log a-\log b}\left(\frac{1-t}{t}a^{1-t}(a^t-b^t)+\frac{t}{1-t}b^t(a^{1-t}- b^{1-t})\right)\nonumber\\
&&\hspace{1.9cm}=(1-t) a^{1-t}\int_0^1a^{tx}b^{t(1-x)}dx +t b^t\int_0^1a^{(1-t)x}b^{(1-t)(1-x)}dx\nonumber\\
&&\hspace{1.9cm}= (1-t) \int_0^1a^{1-t(1-x)}b^{t(1-x)}dx +t \int_0^1a^{(1-t)x}b^{1-x(1-t)}dx.
\end{eqnarray}
By putting $t(1-x) = z$ and $(1-t)x=w$ in the first and the second part of \eqref{6c}, respectively, we get
\begin{eqnarray}&&\hspace{-1.50cm}\label{7} L_t(a,b)=\frac{1-t}{t} \int_0^t a^{1-z}b^zdz +\frac{t}{1-t} \int_0^{1-t}a^{w} b^{1-w}dw\nonumber\\
&&=\frac{1-t}{t} \int_0^t a^{1-z}b^zdz +\frac{t}{1-t} \int_t^1a^{1-z}b^z dz .
\end{eqnarray}
Note also that $a\leq a^{1-z}b^z \leq b$ whenever $a\leq b$, and $z\in [0,1]$. Further, a change of variables technique for the integration and use of \eqref{7} prove $L_t(a,b)= L_{1-t}(b,a).$ Further, it follows from Theorem \ref{t2.2} that
$$
H_t(a,b)\le G_t(a,b)\le L_t(a,b)\le A_t(a,b).
$$
We remark here that the authors in \cite{FU} proved a comparison of the logarithmic mean and the Heronian mean $(=\frac{2}{3}\sqrt{ab}+\frac{1}{3}\left(\frac{a+b}{2}\right))$, in the following way.
\begin{eqnarray}\label{fu}L(a,b)=\frac{a-b}{\log a-\log b}\leq \frac{2}{3}\sqrt{ab}+\frac{1}{3}\left(\frac{a+b}{2}\right).\end{eqnarray}
However, we claim that \eqref{fu} is not true in general, i.e. for weighted version.
We prove this by furnishing the following example:\\
On taking $a=706,\; b=31.8,$ and $t=0.2169,$ we obtain
$$L_t(a,b)=\frac{1}{\log a-\log b}\left(\frac{1-t}{t}a^{1-t}(a^t-b^t)+\frac{t}{1-t}b^t(a^{1-t}- b^{1-t})\right) = 431.8506$$
while, $$\frac{2}{3}a^{1-t}b^t+\frac{1}{3}((1-t)a+tb)=426.8502.$$

Next, we establish the class of operator monotone functions corresponding to the weighted logarithmic mean for operators. We shall also prove the classical inequalities \eqref{4}.

We easily compute the representing function $f_{t}(x)$ for weighted logarithmic mean using \eqref{6c} as,
\begin{eqnarray}\label{6}
f_{t}(x)=\frac{1}{\log x}\left(\frac{1-t}{t}(x^t-1)+\frac{t}{1-t}x^t( x^{1-t}-1)\right),\nonumber\end{eqnarray}
$t\in (0,1)$, $f_0(x) = \lim\limits_{t\rightarrow 0}^{}f_t(x),$ $f_1(x)=\lim\limits_{t\rightarrow 1}^{}f_t(x)$ or equivalently
\begin{eqnarray}\label{8}
f_{t}(x)=(1-t) \int_0^{1}x^{t\alpha}d\alpha+tx^{t}\int_0^{1}x^{(1-t)\alpha}d\alpha.
\end{eqnarray}
The change of variable technique entails further the following two more equivalent forms of \eqref{6}, which will be used in the sequel.
\begin{eqnarray}
f_{t}(x)&=&(1-t) \int_0^{1}x^{t(1-\alpha)}d\alpha+tx^{t}\int_0^{1}x^{(1-t)\alpha}d\alpha\label{9}\\
&=& \label{10}(1-t) \int_0^{1}x^{t\alpha}d\alpha+t\int_0^{1}x^{1-\alpha+t\alpha}d\alpha \label{999}\\
&=& \frac{1-t}{t}\int_0^tx^\alpha d\alpha+\frac{t}{1-t}\int_t^1x^\alpha d\alpha\,\nonumber.
\end{eqnarray}
This is clear from \eqref{999} that the class of functions $f_{t}(x)$ for $t \in [0,1]$ are operator monotone.\\

 We now prove the following lemma which
will provide a tool in proving inequality \eqref{4}.
\begin{lemma}\label{le} For $x\geq 0$ and $t\in[0,1]$, the inequality
\begin{eqnarray}
\label{1111} x^t \leq f_{t}(x)\leq \frac{1}{2}\left(x^t+1-t+tx\right).
\end{eqnarray}
is valid.
\end{lemma}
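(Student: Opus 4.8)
The plan is to exploit the representation
\[
f_{t}(x)=\frac{1-t}{t}\int_0^t x^\alpha\,d\alpha+\frac{t}{1-t}\int_t^1 x^\alpha\,d\alpha
\]
recorded among the equivalent forms of $f_t$, together with the fact that for fixed $x>0$ the map $\alpha\mapsto x^\alpha=e^{\alpha\log x}$ is convex (its second derivative is $(\log x)^2x^\alpha\ge 0$). The whole lemma will then fall out of the two halves of the classical Hermite--Hadamard inequality applied to $\alpha\mapsto x^\alpha$ on the subintervals $[0,t]$ and $[t,1]$; the point to watch is that the prefactors $\tfrac{1-t}{t}$ and $\tfrac{t}{1-t}$ cancel against the interval lengths $t$ and $1-t$, so that the two estimates collapse to exactly the two sides of \eqref{1111}. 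I treat $x>0$, $t\in(0,1)$ throughout and recover the cases $x=0$ and $t\in\{0,1\}$ at the end by continuity, where all three quantities coincide.

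For the left inequality I would use the midpoint (lower) Hermite--Hadamard bound, namely $\frac1t\int_0^t x^\alpha\,d\alpha\ge x^{t/2}$ and $\frac1{1-t}\int_t^1 x^\alpha\,d\alpha\ge x^{(1+t)/2}$. Multiplying by the prefactors and adding gives
\[
f_t(x)\ge (1-t)x^{t/2}+t\,x^{(1+t)/2}=x^{t/2}\bigl((1-t)\cdot 1+t\cdot x^{1/2}\bigr)\ge x^{t/2}\cdot x^{t/2}=x^t,
\]
where the final step is the weighted arithmetic--geometric inequality $(1-t)\cdot 1+t\cdot x^{1/2}\ge x^{t/2}$. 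Alternatively one may simply observe that $x^t\le f_t(x)$ is the special case $a=1,\ b=x$ of the inequality $G_t(a,b)\le L_t(a,b)$ already established after \eqref{7}.

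The right inequality is the genuinely new content, since its right-hand side $\tfrac12(x^t+1-t+tx)=\tfrac12\bigl(G_t(1,x)+A_t(1,x)\bigr)$ refines the previously known bound $f_t(x)\le A_t(1,x)$. Here I would invoke the trapezoidal (upper) Hermite--Hadamard bound on each subinterval,
\[
\frac1t\int_0^t x^\alpha\,d\alpha\le\frac{1+x^t}{2},\qquad \frac1{1-t}\int_t^1 x^\alpha\,d\alpha\le\frac{x^t+x}{2},
\]
and then multiply the first by $\tfrac{1-t}{t}\cdot t=(1-t)$ and the second by $\tfrac{t}{1-t}\cdot(1-t)=t$. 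Summing yields
\[
f_t(x)\le(1-t)\frac{1+x^t}{2}+t\,\frac{x^t+x}{2}=\frac12\bigl(x^t+1-t+tx\bigr),
\]
which is precisely the desired upper bound. The only real obstacle is the bookkeeping that makes the weights cancel this cleanly; once this representation is chosen, convexity of $x^\alpha$ does all the work, and no separate case analysis according as $x\gtrless 1$ is needed, since the Hermite--Hadamard estimates degenerate to equalities at $x=1$.
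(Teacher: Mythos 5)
Your proof is correct, and it takes a recognizably different route from the paper's, though both ultimately rest on convexity of $\alpha\mapsto x^\alpha$. The paper works with representations of $f_t$ as integrals over $[0,1]$: for the lower bound it uses \eqref{999} and applies the weighted AM--GM inequality \emph{pointwise under the integral sign}, pairing the two integrands at the same $\alpha$ so that $(1-t)x^{t\alpha}+t\,x^{1-(1-t)\alpha}\ge x^{(1-t)t\alpha}\,x^{t-t(1-t)\alpha}=x^t$ holds identically in $\alpha$, with no mean-value inequality needed; for the upper bound it uses \eqref{8} together with the chord estimates $x^{t(1-\alpha)}\le(1-\alpha)x^t+\alpha$ and $x^{(1-t)\alpha}\le\alpha x^{1-t}+1-\alpha$ and then integrates --- which, after your change of variables, is exactly the derivation of the trapezoidal bound you invoke. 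So your upper-bound argument is essentially the paper's in different packaging, while your lower-bound argument is genuinely different: you apply the midpoint Hermite--Hadamard bound on $[0,t]$ and $[t,1]$ separately and then need a second, scalar AM--GM step $(1-t)+t\,x^{1/2}\ge x^{t/2}$ to close the argument. What your route buys: the lemma is exhibited as a literal instance of the classical two-sided Hermite--Hadamard inequality on the two subintervals (thematically consonant with the paper, which is built around that inequality), and it yields for free the refinement $f_t(x)\ge(1-t)x^{t/2}+t\,x^{(1+t)/2}\ge x^t$. What the paper's route buys: a one-line pointwise lower bound with no intermediate term. One small quibble: at $x=0$ with $t\in(0,1)$ the three quantities do \emph{not} all coincide, since $f_t(0)=0$ while the upper bound equals $(1-t)/2$; this does not matter, because passing to the limit preserves the non-strict inequalities (and the case $x=0$ is trivial anyway, as $x^t=f_t(0)=0$), but the parenthetical justification should be corrected.
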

\begin{proof}
To prove the first inequality in \eqref{1111}, we use \eqref{999} for $f_{t}(x)$ to obtain
\begin{eqnarray*}
\label{12} (1-t) \int_0^{1}x^{t\alpha}d\alpha+t\int_0^{1}x^{1-(1-t)\alpha}d\alpha \hspace{-0.6cm}&&\geq \int_0^{1}x^{(1-t)t\alpha} x^{t(1-t)(1-\alpha)+t^2}d\alpha  =\int_0^{1}x^{(1-t)t+t^2}d\alpha\nonumber = x^t.
\end{eqnarray*}
To prove the latter inequality in \eqref{1111}, we use \eqref{8} for $f_{t}(x)$ to get
\begin{eqnarray}
(1-t) \int_0^{1}x^{t(1-\alpha)}d\alpha+tx^{t}\int_0^{1}x^{(1-t)\alpha}d\alpha &&\nonumber\\
 &&\hspace{-1.6cm}\leq \int_0^{1}(1-t)((1-\alpha)x^{t}+\alpha)+tx^{t}(\alpha x^{1-t}+1-\alpha)d\alpha\nonumber\\
 &&\hspace{-1.6cm}=\int_0^{1}\big((1-\alpha)x^{t}+(1-t)\alpha+\alpha tx\big)d\alpha\nonumber \\
 &&\hspace{-1.6cm}=\frac{1}{2}\left(x^t+1-t+tx\right).\nonumber
\end{eqnarray}
\end{proof}
\begin{theorem}\label{the}
Let $A,B$ be invertible operators in $\mathbb{B}(\mathscr{H})_{+}$ and $t\in [0,1].$ Then
\begin{eqnarray}\label{113} A!_{t}B\leq A\#_{t}B\leq A\ell_t B\leq \frac{1}{2}(A\#_{t}B + A\nabla_{t} B)\le A\nabla_tB.\end{eqnarray}
\end{theorem}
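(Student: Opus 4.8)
The plan is to transfer the scalar inequality of Lemma \ref{le} to operators by means of the order-preserving correspondence $\sigma \leftrightarrow f_\sigma$ recorded in the introduction, exploiting the fact that every quantity appearing in \eqref{113} is the operator mean attached to a positive operator monotone function. The substantive analytic content has already been isolated in Lemma \ref{le}, so the theorem should follow by bookkeeping with representing functions together with two elementary applications of the weighted arithmetic--geometric mean inequality.

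First I would record the representing functions involved. From the defining relation $f(x)I = I\sigma(xI)$ one computes $f_{\#_t}(x) = x^t$, $f_{\nabla_t}(x) = 1-t+tx$ and $f_{!_t}(x) = ((1-t)+tx^{-1})^{-1} = \frac{x}{(1-t)x+t}$, while the discussion preceding Lemma \ref{le}, in particular \eqref{999}, identifies $f_{\ell_t}(x) = f_t(x)$ as a positive operator monotone function. I would then observe that the averaged connection $\frac{1}{2}(\#_t+\nabla_t)$ is again an operator mean: its representing function is the convex combination $\frac{1}{2}(x^t + 1 - t + tx)$, which is operator monotone as a positive combination of operator monotone functions and takes the value $1$ at $x=1$.

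With these identifications the two central inequalities of \eqref{113} are exactly the two inequalities of Lemma \ref{le} read through the order isomorphism: $x^t \leq f_t(x)$ yields $A\#_t B \leq A\ell_t B$, and $f_t(x) \leq \frac{1}{2}(x^t + 1 - t + tx)$ yields $A\ell_t B \leq \frac{1}{2}(A\#_t B + A\nabla_t B)$. For the two outer inequalities I would supply the remaining scalar estimates directly from $(1-t)p + tq \geq p^{1-t}q^t$. Taking $p=1, q=x$ gives $x^t \leq 1-t+tx$, whence $\frac{1}{2}(x^t + 1-t+tx) \leq 1-t+tx$ and therefore $\frac{1}{2}(A\#_t B + A\nabla_t B) \leq A\nabla_t B$. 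Taking $p=x, q=1$ gives $x^{1-t} \leq (1-t)x+t$, and dividing by $x^t$ yields $\frac{x}{(1-t)x+t} \leq x^t$, i.e. $f_{!_t}(x) \leq f_{\#_t}(x)$, whence $A!_t B \leq A\#_t B$.

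Since each step is an application of the order isomorphism to an inequality between operator monotone functions, there is little genuine difficulty; the main points requiring care are confirming that $\frac{1}{2}(\#_t+\nabla_t)$ is a bona fide operator mean so that the order isomorphism applies to it (hence the check of operator monotonicity of the combined representing function), the standing invertibility hypothesis on $A,B$ which legitimizes the harmonic term $A!_t B$, and treating the endpoints $t\in\{0,1\}$ through the limiting definitions adopted above. I expect the most delicate part to be the careful assembly of the two endpoint inequalities from weighted AM--GM, though this remains routine.
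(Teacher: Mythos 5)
Your proposal is correct and takes essentially the same route as the paper: the middle double inequality is deduced from Lemma \ref{le}, and the outer inequalities from the standard weighted harmonic--geometric and geometric--arithmetic mean inequalities. The only cosmetic difference is that you invoke the abstract order isomorphism $\sigma \leftrightarrow f_\sigma$ (checking along the way that $\frac{1}{2}(\#_t+\nabla_t)$ is a bona fide operator mean and supplying explicit scalar proofs of the outer inequalities), whereas the paper performs the equivalent step by hand, substituting $A^{-1/2}BA^{-1/2}$ into Lemma \ref{le} and conjugating by $A^{1/2}$.
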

\begin{proof}It is sufficient to prove the middle double inequality, since $A!_{t}B\leq A\#_{t}B$ and $A\#_tB\le A\nabla_tB$ are the well known weighted harmonic-geometric and weighted geometric-arithmetic mean inequalities. Replacing $x$ by $A^{-1/2}BA^{-1/2}$ in Lemma \ref{le}, we obtain
\begin{eqnarray}\label{26} I\#_{t}A^{-1/2}BA^{-1/2}\leq I\ell_t A^{-1/2}BA^{-1/2}\leq \frac{1}{2}(I\#_{t}A^{-1/2}BA^{-1/2} + I\nabla_{t} A^{-1/2}BA^{-1/2}).\end{eqnarray}
Now, pre and post multiplying \eqref{26} by $A^{1/2}$, we get the required result.
\end{proof}
\begin{remark}In view of $A\#_{t}B \leq A\nabla_{t} B$, inequality \eqref{113} is better than last inequality in \eqref{4}. Moreover, inequality \eqref{113} is establishing a comparison of weighted logarithmic and weighted Heronian means for operators. For some operator inequalities regarding Heronian mean see \cite{KK}.
\end{remark}

\section{Weighted Identric Mean}

The identric mean for $a,b\in \mathbb{R}^+ $ is defined to be $I(a,b)=\frac{1}{e}\left(\frac{b^b}{a^a}\right)^\frac{1}{b-a}.$ In this section we introduce weighted identric mean and identify its representing function. We also prove an operator inequality involving identric mean in this section.

\begin{theorem}\label{29} For $a,b\in\mathbb{R}^+$ and $t\in[0,1]$ it holds that
\begin{eqnarray}
\label{22}a^{1-t}b^t\leq \frac{1}{e}((1-t)a+tb)^{\frac{(1-2t)(tb+(1-t)a)}{t(1-t)(b-a)}}\left(\frac{b^\frac{tb}{1-t}}{a^\frac{(1-t)a}{t}}\right)^\frac{1}{b-a}
\leq (1-t)a+tb.\end{eqnarray}
\end{theorem}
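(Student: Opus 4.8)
The plan is to imitate the derivation of Theorem \ref{t2.2}, but to feed a logarithmic function into the refined Hermite--Hadamard inequality of Theorem \ref{14T} instead of an exponential one. The motivation is that the unweighted identric mean is precisely the exponential of the integral mean of $\log$, since $\frac{1}{b-a}\int_a^b\log x\,dx=\log I(a,b)$; hence a weighted identric mean should emerge by exponentiating the two weighted integrals appearing in \eqref{14}.

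First I would apply Theorem \ref{14T} to the convex function $f(x)=-\log x$ on $[a,b]\subset(0,\infty)$ (equivalently, invoke the concavity of $\log$, which reverses all three terms of \eqref{14}). For $t\in(0,1)$ this yields
\begin{eqnarray*}
\log\big((1-t)a+tb\big)\geq M \geq (1-t)\log a+t\log b=\log\big(a^{1-t}b^t\big),
\end{eqnarray*}
where $M$ denotes the middle quantity
\begin{eqnarray*}
M=(1-t)\int_0^1\log\big(t\alpha(b-a)+a\big)\,d\alpha+t\int_0^1\log\big((1-t)\alpha(b-a)+tb+(1-t)a\big)\,d\alpha.
\end{eqnarray*}
Exponentiating turns this chain into $a^{1-t}b^t\le e^{M}\le (1-t)a+tb$, so it remains only to show that $e^{M}$ equals the middle member of \eqref{22}.

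Next I would evaluate the two integrals in closed form using $\int\log u\,du=u\log u-u$. With the substitutions $u=t\alpha(b-a)+a$ and $u=(1-t)\alpha(b-a)+tb+(1-t)a$ the integrals run over $[a,c]$ and $[c,b]$ respectively, where $c=(1-t)a+tb$ is the weighted arithmetic mean. Collecting the resulting terms, the coefficient of $c\log c$ becomes $\frac{1}{b-a}\left(\frac{1-t}{t}-\frac{t}{1-t}\right)=\frac{1-2t}{t(1-t)(b-a)}$, the coefficients of $\log a$ and $\log b$ become $-\frac{(1-t)a}{t(b-a)}$ and $\frac{tb}{(1-t)(b-a)}$, and the non-logarithmic terms telescope to $-(1-t)-t=-1$. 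Exponentiating $M$ then reproduces exactly
\begin{eqnarray*}
e^{M}=\frac1e\,\big((1-t)a+tb\big)^{\frac{(1-2t)(tb+(1-t)a)}{t(1-t)(b-a)}}\left(\frac{b^{\frac{tb}{1-t}}}{a^{\frac{(1-t)a}{t}}}\right)^{\frac{1}{b-a}},
\end{eqnarray*}
which is the desired middle term of \eqref{22}.

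The only genuine obstacle is the bookkeeping in this simplification: one must correctly combine the four pieces coming from the two antiderivatives, in particular observing that the $c\log c$ contributions partially cancel to leave the factor $1-2t$, and that the non-logarithmic pieces collapse to $-1$, which is precisely what produces the factor $1/e$. The degenerate cases $a=b$ and $t\in\{0,1\}$, where the denominators $t(1-t)(b-a)$ vanish, are then covered by continuity, exactly as $L_t(a,b)$ was extended to the endpoints after Theorem \ref{t2.2}.
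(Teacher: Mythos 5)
Your proposal is correct and follows essentially the same route as the paper: take $f(x)=-\log x$ in Theorem \ref{14T}, evaluate the two integrals via $\int\log u\,du=u\log u-u$ over $[a,c]$ and $[c,b]$ with $c=(1-t)a+tb$, observe that the $c\log c$ terms combine to the factor $1-2t$ and the constant terms to $-1$ (giving $1/e$), and exponentiate. Your bookkeeping of the coefficients matches the paper's computation in \eqref{24} exactly, and the continuity remark for the degenerate cases $t\in\{0,1\}$, $a=b$ is a harmless (indeed welcome) addition.
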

\begin{proof} On taking $f(x)=-\log x$ in Theorem \ref{14T}, we obtain
\begin{eqnarray}
\label{23} \log(tb+\hspace{-0.7cm}&&(1-t)a)\nonumber\\
\hspace{-0.7cm}&&\geq (1-t)\int_0^{1}\log(t(b-a)\alpha+a)d\alpha+t\int_0^{1}\log((1-t)(b-a)\alpha+tb+(1-t)a) d\alpha\nonumber\\
&& \geq t\log b+(1-t)\log a.
\end{eqnarray}
Now, the calculations of the middle part of the above inequality are given by
\begin{eqnarray}
\label{24} (1-t)\int_0^{1}\log(\hspace{-0.7cm}&&t(b-a)\alpha+a)d\alpha+t\int_0^{1}\log((1-t)(b-a)\alpha+tb+(1-t)a) d\alpha\nonumber\\
&&=\frac{1-t}{t(b-a)}\left(((1-t)a+tb)\log ((1-t)a+tb)-a\log a-t(b-a)\right)\nonumber\\
&&+\frac{t}{(1-t)(b-a)}\left(b\log b-((1-t)a+tb)\log ((1-t)a+tb)-(1-t)(b-a)\right)\nonumber\\
&&=\log\left(\frac{1}{e}((1-t)a+tb)^{\frac{(1-2t)(tb+(1-t)a)}{t(1-t)(b-a)}}\left(\frac{b^\frac{tb}{1-t}}{a^\frac{(1-t)a}{t}}\right)^\frac{1}{b-a}\right).
\end{eqnarray}
Finally, using \eqref{24} in \eqref{23} and monotonicity of exponential function, we get the required inequality \eqref{22}.
\end{proof}
We now introduce the
weighted identric mean $I_t(a,b)$ for $t\in(0,1)$ of two positive numbers $a,b$ as
\begin{eqnarray*}
\label{6b}I_t(a,b)&=&\frac{1}{e}((1-t)a+tb)^{\frac{(1-2t)(tb+(1-t)a)}
{t(1-t)(b-a)}}\left(\frac{b^\frac{tb}{1-t}}{a^\frac{(1-t)a}{t}}\right)^\frac{1}{b-a}\,.\nonumber\\
\end{eqnarray*}
$I_0(a,b)$ and $I_1(a,b)$ are defined to be $ \lim\limits_{t\rightarrow 0}^{}I_t(a,b)$ and $ \lim\limits_{t\rightarrow 1}^{}I_t(a,b)$ respectively.
We easily see that $\lim\limits_{t\rightarrow 0}^{}I_t(a,b)=a$, $\lim\limits_{t\rightarrow 1}^{}I_t(a,b)=b$, and $I_{1/2}(a,b)= I(a,b).$

Here also, we observe that $I_t(a,b)$ satisfies all the properties for any weighted mean except (4) which becomes clear from the integral equation \eqref{24}.

We shall denote by $\mathcal{I}_t$ the operator identric mean.
To see an operator inequality involving identric mean, we use the representing function $g_{t}(x)$ for this in following way,
\begin{eqnarray*}
g_{t}(x)=\frac{1}{e}(1-t+tx)^{\frac{(1-2t)(1-t+tx)}
{t(1-t)(x-1)}}\left(x^{\frac{tx}{1-t}}\right)^\frac{1}{x-1}
\end{eqnarray*}
Employing \eqref{24} we get
\begin{eqnarray}\label{msm2}
\log(g_{t})(x)= \frac{1-t}{t}\int_0^t\log(\alpha x+(1-\alpha))d\alpha +\frac{t}{1-t}\int_t^1\log(\alpha x+(1-\alpha))d\alpha
\end{eqnarray}

\begin{theorem}
Let $A,B$ be invertible operators in $\mathbb{B}(\mathscr{H})_{+}$ and $t\in [0,1].$ Then
\begin{eqnarray}\label{30} A!_{t}B\leq A\#_{t}B\leq A\mathcal{I}_t B\leq A\nabla_{t} B.\end{eqnarray}
\end{theorem}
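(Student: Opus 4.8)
The plan is to follow the template of the proof of Theorem \ref{the}. Since $A!_{t}B\leq A\#_{t}B$ is the standard weighted harmonic--geometric mean inequality, it suffices to establish the two remaining inequalities $A\#_{t}B\leq A\mathcal{I}_t B\leq A\nabla_{t} B$. As in the logarithmic case, I would reduce these operator inequalities to a single scalar inequality for the representing function $g_t$, namely
$$x^t \leq g_t(x)\leq 1-t+tx \qquad (x>0,\ t\in[0,1]),$$
and then transport it to operators by functional calculus and congruence by $A^{1/2}$.

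First, I would record the identification $g_t(x)=I_t(1,x)$: substituting $a=1$ and $b=x$ into the defining formula for $I_t(a,b)$ recovers the displayed expression for $g_t(x)$, since $a^{(1-t)a/t}=1$ while $(1-t)a+tb=1-t+tx$ and $b-a=x-1$. Consequently the scalar double inequality above is exactly Theorem \ref{29} specialized to $a=1$, $b=x$, where $a^{1-t}b^t$ becomes $x^t$ and $(1-t)a+tb$ becomes $1-t+tx$. (Alternatively one could prove it \emph{ab initio} from the integral representation \eqref{msm2}, bounding $\log(\alpha x+(1-\alpha))$ below via the weighted arithmetic--geometric inequality, equivalently convexity of $-\log$, and above via concavity of $\log$, in direct analogy with Lemma \ref{le}; but invoking Theorem \ref{29} is the economical route.)

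Next I would replace $x$ by the positive invertible operator $X:=A^{-1/2}BA^{-1/2}$ and apply the continuous functional calculus to the scalar inequality, obtaining
$$X^t \leq g_t(X)\leq (1-t)I+tX.$$
Because $A$ and $B$ are positive and invertible, the spectrum of $X$ lies in $(0,\infty)$, so $g_t$ is continuous there and $g_t(X)$ is well defined; this is precisely the content of the definition $A\mathcal{I}_t B:=A^{1/2}g_t(X)A^{1/2}$. I would emphasize that, unlike the Kubo--Ando order-preservation argument, this functional-calculus route does \emph{not} require verifying operator monotonicity of $g_t$, which is not transparent from its exponential form. Finally I would conjugate the last display by the self-adjoint operator $A^{1/2}$, using $A^{1/2}X^tA^{1/2}=A\#_tB$ and $A^{1/2}((1-t)I+tX)A^{1/2}=(1-t)A+tB=A\nabla_tB$, together with the defining identity $A^{1/2}g_t(X)A^{1/2}=A\mathcal{I}_t B$, to arrive at \eqref{30}.

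The main obstacle is the scalar estimate $x^t\leq g_t(x)\leq 1-t+tx$; once the identification $g_t(x)=I_t(1,x)$ is made, this is handed to us by Theorem \ref{29}, so the genuinely delicate work has already been done there. The remaining steps—functional calculus on $X$ and congruence by $A^{1/2}$—are routine and identical in spirit to the proof of Theorem \ref{the}.
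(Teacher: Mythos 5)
Your proposal is correct and follows essentially the same route as the paper: the paper also skips the harmonic--geometric step, obtains the scalar bounds $x^t \leq g_t(x) \leq 1-t+tx$ from Theorem \ref{29} (by replacing $b/a$ with $x$, which is the same move as your substitution $a=1$, $b=x$), and then applies functional calculus to $A^{-1/2}BA^{-1/2}$ followed by congruence with $A^{1/2}$. Your explicit remark that this argument never needs operator monotonicity of $g_t$ is a nice touch, and it is consistent with the paper's own closing remark that operator monotonicity of $g_t$ remains open.
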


\begin{proof} Again, as in Theorem \ref{the}, we skip to prove first inequality.
We only prove the remaining part in \eqref{30}. Using \eqref{22} with $b/a$ replaced by $x$, we obtain
\begin{eqnarray}
\label{27} x^t \leq g_{t}(x)\leq 1-t+tx.
\end{eqnarray}
Replacing $x$ by $A^{-1/2}BA^{-1/2}$ in \eqref{27}, we obtain
\begin{eqnarray}\label{31}
I\#_{t}A^{-1/2}BA^{-1/2}\leq I\mathcal{I}_t A^{-1/2}BA^{-1/2}\leq I\nabla_{t} A^{-1/2}BA^{-1/2}.
\end{eqnarray}
Now, pre and post multiplication by $A^{1/2}$ in \eqref{31} conclude the result.
\end{proof}

Finally, we would like to remark that the question of proving that the representing function $g_t(x)$ is operator monotone has eluded us. However it follows from \eqref{msm2} that the function $\log g_t(x)$ is operator monotone. It is further remarked that it is well known that the functions $g_0(x), g_{1/2}(x)$ and $g_1(x)$ are operator monotone.

\section{Invariance of operator means}

In this section, we investigate the invariance of operator means and some operator inequalities between operator means.

\begin{definition}
An operator mean $\sigma$ is called \emph{invariant} with respect to operator means $\tau, \rho$ if
\begin{eqnarray}\label{inv1}
A\sigma B=(A\tau B) \sigma (A\rho B) \qquad (A, B \in {\mathbb B}({\mathscr H})_+)\,.
\end{eqnarray}
\end{definition}

%---------------------------------------------------------------------------------------%
Employing the properties of operator means we observe that \eqref{inv1} holds if and only if

\begin{eqnarray*}
I\sigma A^{\frac{-1}{2}}BA^{\frac{-1}{2}}=(I\tau A^{\frac{-1}{2}}BA^{\frac{-1}{2}}) \sigma (I\rho A^{\frac{-1}{2}}BA^{\frac{-1}{2}})\qquad (A, B \in {\mathbb B}({\mathscr H})_+)\, ,
\end{eqnarray*}
which is in turn equivalent to
\begin{eqnarray}\label{inv2}
I\sigma C=(I\tau C) \sigma (I\rho C)\qquad (C \in {\mathbb B}({\mathscr H})_+)\,.
\end{eqnarray}

By the definition of the representing function \eqref{inv2} holds if and only if
\begin{eqnarray}\label{inv3}
f(C)&=&g(C)\sigma h(C)\nonumber\\
&=&g(C)^{\frac{1}{2}}f\left(g(C)^{\frac{-1}{2}}h(C)g(C)^{\frac{-1}{2}}\right)g(C)^{\frac{1}{2}}\nonumber \\
&=& g(C)f\left(g(C)^{-1}h(C)\right)\qquad (C \in {\mathbb B}({\mathscr H})_+)\,,
\end{eqnarray}
where $f, g, h$ are representing functions corresponding to $\sigma, \tau, \rho$, respectively. By the functional calculus \eqref{inv3} is true if and only if $f(t)=g(t)f\left(g(t)^{-1}h(t)\right)$ for all $t\geq 0$.

\noindent We have just proved the following theorem.
%---------------------------------------------------------------------------------------%
\begin{theorem}\label{main2}
Let $\sigma, \tau, \rho$ be three operator means with representing functions $f, g, h$. Then $\sigma$ is invariant with respect to operator means $\tau, \rho$ if and only if
\begin{eqnarray*}
f(t)=g(t)f\left(g(t)^{-1}h(t)\right)
\end{eqnarray*}
for all $t\geq 0$.
\end{theorem}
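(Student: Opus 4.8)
The plan is to convert the operator identity \eqref{inv1} into a scalar functional equation in three moves: reduce to the normalised case $A=I$, rewrite everything through representing functions, and then collapse the resulting operator identity to a scalar one by the functional calculus. For the first move I would use that the Kubo--Ando transformer axiom (ii), though stated only as an inequality, is in fact an \emph{equality} for every invertible congruence: applying (ii) to the pair $(C^*AC,C^*BC)$ with the invertible operator $C^{-1}$ produces the reverse inequality, whence $C^*(A\sigma B)C=(C^*AC)\sigma(C^*BC)$ when $C$ is invertible, and the same for $\tau$ and $\rho$. Choosing $C=A^{-1/2}$ (permissible since $A$ is positive and invertible) transforms \eqref{inv1} into $I\sigma(A^{-1/2}BA^{-1/2})=(I\tau A^{-1/2}BA^{-1/2})\sigma(I\rho A^{-1/2}BA^{-1/2})$, and since $A^{-1/2}BA^{-1/2}$ already realises every positive operator (take $A=I$, $B=C$), \eqref{inv1} is equivalent to \eqref{inv2}.

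Next I would pass to the representing functions. From $f(x)I=I\sigma(xI)$ the functional calculus upgrades to $I\sigma C=f(C)$ for all $C\in\mathbb{B}(\mathscr{H})_+$, and likewise $I\tau C=g(C)$ and $I\rho C=h(C)$, so that \eqref{inv2} becomes $f(C)=g(C)\,\sigma\,h(C)$. Applying the standard representation $X\sigma Y=X^{1/2}f(X^{-1/2}YX^{-1/2})X^{1/2}$ (already invoked in \eqref{inv3}) with $X=g(C)$ and $Y=h(C)$ gives $f(C)=g(C)^{1/2}f\big(g(C)^{-1/2}h(C)g(C)^{-1/2}\big)g(C)^{1/2}$. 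The decisive simplification is that $g(C)$ and $h(C)$ commute, being functions of the single operator $C$; consequently $g(C)^{-1/2}h(C)g(C)^{-1/2}=g(C)^{-1}h(C)$ and the sandwiched expression collapses to $f(C)=g(C)f\big(g(C)^{-1}h(C)\big)$, reproducing the chain in \eqref{inv3}.

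Finally, writing $\varphi(t)=g(t)^{-1}h(t)$ so that $f(g(C)^{-1}h(C))=(f\circ\varphi)(C)$ by the functional calculus, the identity reads $f(C)=g(C)\,(f\circ\varphi)(C)$ for every positive $C$. Specialising to $C=tI$ returns the scalar equation $f(t)=g(t)f(g(t)^{-1}h(t))$, while the spectral theorem shows conversely that this scalar identity forces the operator identity back for arbitrary $C$; this two-way implication is precisely Theorem \ref{main2}. The step I expect to cost the most effort is the promotion of axiom (ii) to an equality under invertible congruences, since this is exactly what licenses the reduction from general $A$ to $A=I$; a secondary technical point worth flagging is the invertibility of $g(C)$, needed for $g(C)^{-1}h(C)$ to be well defined, which holds for the nondegenerate means in play but should be checked for representing functions that vanish at the endpoints.
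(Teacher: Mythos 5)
Your proposal is correct and follows essentially the same route as the paper: reduce \eqref{inv1} to the normalised identity \eqref{inv2} via congruence by $A^{\pm 1/2}$, rewrite it through representing functions as $f(C)=g(C)\sigma h(C)$, collapse the sandwich using commutativity of $g(C)$ and $h(C)$, and pass to scalars by the functional calculus. If anything, you are more careful than the paper, which silently assumes the transformer equality for invertible congruences, the invertibility of $g(C)$, and the non-invertible-$A$ case (handled by continuity, axiom (iii)) that both arguments gloss over.
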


\begin{corollary}
The operator weighted geometric mean $\#_p$ is invariant with respect to the operator weighted geometric means $\#_q, \#_r$ if and only if $p(1-r)=q(1-p)$.
\end{corollary}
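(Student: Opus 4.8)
The plan is to apply Theorem \ref{main2} directly, because the representing functions of the weighted geometric means are just elementary power functions. Recall from Section 1 that the operator mean $\#_s$ has representing function $x\mapsto x^{s}$. Hence, setting $\sigma=\#_p$, $\tau=\#_q$ and $\rho=\#_r$, the associated representing functions are $f(x)=x^{p}$, $g(x)=x^{q}$ and $h(x)=x^{r}$. Since each $x^{s}$ is a genuine operator monotone representing function, the functional-calculus reduction underlying Theorem \ref{main2} applies with no extra hypotheses.

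First I would substitute these three functions into the invariance criterion of Theorem \ref{main2}, namely $f(x)=g(x)\,f\!\left(g(x)^{-1}h(x)\right)$ for all $x\geq 0$. (I write the argument as $x$ rather than $t$ to keep it visually distinct from the weight parameters.) The right-hand side then simplifies to $x^{q}\,f\!\left(x^{-q}x^{r}\right)=x^{q}\bigl(x^{r-q}\bigr)^{p}=x^{\,q+p(r-q)}$, while the left-hand side is simply $x^{p}$.

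The only substantive step is to compare exponents: the identity $x^{p}=x^{\,q+p(r-q)}$ holds for every $x\geq 0$ precisely when $p=q+p(r-q)$. Rearranging gives $p-pr=q-pq$, that is, $p(1-r)=q(1-p)$, which is exactly the claimed criterion, and the equivalence follows in both directions.

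I do not expect a genuine obstacle in this argument; the computation is short once Theorem \ref{main2} is invoked. The one point requiring care is the bookkeeping of exponents together with keeping the two roles of the symbols distinct (the weights $p,q,r$ versus the function argument $x$), so that the power-law manipulation $f(g(x)^{-1}h(x))=(x^{r-q})^{p}$ is carried out correctly.
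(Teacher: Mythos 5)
Your proposal is correct and follows essentially the same route as the paper: both invoke Theorem \ref{main2} with the representing functions $x^{p}$, $x^{q}$, $x^{r}$, reduce the invariance condition to $x^{p}=x^{q}\bigl(x^{-q}x^{r}\bigr)^{p}$, and compare exponents to obtain $p(1-r)=q(1-p)$. Your write-up merely spells out the exponent bookkeeping that the paper leaves implicit.
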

\begin{proof}
$\#_p$ is invariant with respect to $\#_q, \#_r$ if and only if $t^p=t^q(t^{-q}t^r)^p$, and this if and only if $p(1-r)=q(1-p)$.
\end{proof}

The next results concern with positivity of some block matrices and its application for finding some interrelationship between some operator means considered in this paper, see \cite{MS}.

\begin{lemma} \label{lem0}\cite[Theorem 1.3.3]{BHA}
Let $X, Z$ be (strictly) positive operators and $Y$ be an arbitrary operator. Then the block matrix
$\left[\begin{array}{cc}X&Y\\Y^*&Z\end{array}\right]$ is positive if and only if $X \geq YZ^{-1}Y^*$.
\end{lemma}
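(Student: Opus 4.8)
The plan is to establish the equivalence through the Schur-complement factorization, exploiting the strict positivity (hence invertibility) of $Z$. Since $Z$ is invertible, the operator $S := X - Y Z^{-1} Y^*$ (the Schur complement of $Z$) is a well-defined bounded self-adjoint operator, and the condition $X \geq Y Z^{-1} Y^*$ is precisely $S \geq 0$. The entire argument then reduces to showing that positivity of the block matrix is equivalent to positivity of $S$.

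First I would introduce the invertible block operator on $\h \oplus \h$,
$$L = \begin{bmatrix} I & Y Z^{-1} \\ 0 & I \end{bmatrix}, \qquad L^{-1} = \begin{bmatrix} I & -Y Z^{-1} \\ 0 & I \end{bmatrix},$$
and verify by direct block multiplication the congruence identity
$$\begin{bmatrix} X & Y \\ Y^* & Z \end{bmatrix} = L \begin{bmatrix} S & 0 \\ 0 & Z \end{bmatrix} L^*.$$
Using $Z^* = Z$ and $(Y Z^{-1})^* = Z^{-1} Y^*$, one checks that $L^*$ is the lower-triangular block operator with $(2,1)$-entry $Z^{-1} Y^*$, and the two block products reproduce all four entries of the original matrix; in particular the $(1,1)$-entry recombines as $S + Y Z^{-1} Y^* = X$.

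Next I would invoke the fact that congruence by an invertible bounded operator preserves positivity: for any self-adjoint block operator $D$ one has $L D L^* \geq 0$ if and only if $D \geq 0$, since $D = L^{-1}(L D L^*)(L^{-1})^*$ and both maps $T \mapsto L T L^*$ and $T \mapsto L^{-1} T (L^{-1})^*$ send positive operators to positive operators. Applying this with $D = \operatorname{diag}(S, Z)$ shows that the original block matrix is positive if and only if $\operatorname{diag}(S, Z) \geq 0$. Finally, since a block-diagonal operator is positive exactly when each diagonal block is positive, and $Z \geq 0$ holds by hypothesis, this reduces to $S \geq 0$, that is, $X \geq Y Z^{-1} Y^*$, as required.

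I do not anticipate a genuine obstacle here, as the result is classical; the only point needing mild care in the operator (rather than finite-dimensional) setting is ensuring that $Z^{-1}$ is bounded, which is guaranteed by the hypothesis that $Z$ is strictly positive, and that the factorization holds verbatim for bounded operators on $\h \oplus \h$. An alternative, equally short route would be the completing-the-square argument: expand $\langle M \xi, \xi\rangle$ for $\xi = (u,v) \in \h \oplus \h$ as $\langle X u, u\rangle + 2\,\mathrm{Re}\langle Y v, u\rangle + \langle Z v, v\rangle$ and substitute $v = -Z^{-1} Y^* u$ to extract the Schur complement; I would nonetheless prefer the factorization, since it makes the equivalence transparent in both directions simultaneously.
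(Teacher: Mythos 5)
Your proof is correct. The paper gives no proof of this lemma at all---it is quoted verbatim from Bhatia's \emph{Positive Definite Matrices} (Theorem 1.3.3)---and your Schur-complement congruence factorization $\left[\begin{smallmatrix} X & Y \\ Y^* & Z \end{smallmatrix}\right] = L\,\mathrm{diag}(X - YZ^{-1}Y^*,\, Z)\,L^*$ is precisely the standard textbook argument behind that cited result, with the boundedness of $Z^{-1}$ correctly accounted for in the operator setting.
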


\begin{theorem}\label{main}
Let $A, B$ be strictly positive operators and let $\sigma, \tau, \rho$ be operator means with representing functions $f, g, h$, respectively. Then
\begin{eqnarray}\label{cata}
\left[\begin{array}{cc}A \sigma B& A \tau B\\A \tau B& A\rho B\end{array}\right] \geq 0\,.
\end{eqnarray}
if and only if
\begin{eqnarray}\label{fgh}
f(t)h(t) \geq g(t)^2
\end{eqnarray}
for all $t\geq 0$.
\end{theorem}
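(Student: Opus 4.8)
\emph{Approach.} The plan is to collapse the operator statement to a scalar $2\times 2$ positivity criterion by combining the congruence invariance of operator means with Lemma~\ref{lem0}. The key tool is the equality form of axiom (ii) for invertible congruences, namely $T^*(A\sigma B)T=(T^*AT)\sigma(T^*BT)$ whenever $T$ is invertible. Taking $T=A^{-1/2}$ (legitimate because $A$ is strictly positive) and setting $C=A^{-1/2}BA^{-1/2}$, this converts $A\sigma B,\,A\tau B,\,A\rho B$ into $I\sigma C=f(C)$, $I\tau C=g(C)$ and $I\rho C=h(C)$, by the very definition of the representing function. Since conjugation by the invertible operator $\mathrm{diag}(A^{-1/2},A^{-1/2})$ preserves \emph{and} reflects positivity, the positivity of \eqref{cata} is equivalent to the positivity of
\[
\begin{bmatrix} f(C) & g(C) \\ g(C) & h(C)\end{bmatrix}.
\]
The decisive structural feature is that $f(C),g(C),h(C)$ mutually commute, being functions of the single operator $C$.

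\emph{The direction \eqref{fgh}$\Rightarrow$\eqref{cata}.} Fixing $A,B$, I would apply Lemma~\ref{lem0} to the reduced matrix with $X=f(C)$, $Y=g(C)$, $Z=h(C)$. This first requires $h(C)$ to be invertible. I would justify this from the fact that a representing function, being operator monotone and normalized by $f(1)=1$, is strictly positive on $(0,\infty)$: a nonnegative operator monotone function vanishing at some $t_0>0$ would vanish on $[0,t_0]$ and hence, by the analyticity of operator monotone functions, vanish identically, contradicting the normalization. As $C$ is strictly positive its spectrum lies in a compact subset of $(0,\infty)$, so $f(C),h(C)$ are strictly positive. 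Using commutativity, the Schur complement condition $f(C)\ge g(C)h(C)^{-1}g(C)$ of Lemma~\ref{lem0} rewrites as $h(C)^{-1/2}\big(f(C)h(C)-g(C)^2\big)h(C)^{-1/2}\ge0$, which holds because the pointwise inequality $fh-g^2\ge0$ yields $(fh-g^2)(C)\ge0$ by the functional calculus. Conjugating back by $\mathrm{diag}(A^{1/2},A^{1/2})$ then gives \eqref{cata}.

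\emph{The direction \eqref{cata}$\Rightarrow$\eqref{fgh}.} Here I would specialize to commuting scalar operators: choosing $A=I$ and $B=tI$ for a fixed $t>0$ produces $A\sigma B=f(t)I$, $A\tau B=g(t)I$, $A\rho B=h(t)I$, so that \eqref{cata} collapses to the ampliation of the numerical matrix $\begin{bmatrix} f(t)&g(t)\\ g(t)&h(t)\end{bmatrix}$, which is positive if and only if that numerical matrix is. Since $f(t),h(t)\ge0$ hold automatically, this is equivalent to $f(t)h(t)-g(t)^2\ge0$; letting $t$ range over $(0,\infty)$ and invoking continuity at $0$ delivers \eqref{fgh} on all of $[0,\infty)$.

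\emph{Main obstacle.} The delicate point is the invertibility bookkeeping needed to form the Schur complement in Lemma~\ref{lem0}, i.e. ensuring $h(C)$ is invertible before writing $h(C)^{-1}$. I expect the monotonicity-plus-analyticity argument above to be the cleanest justification. Should one wish to avoid invertibility altogether, an alternative is to diagonalize through the spectral measure $E$ of $C$, expressing the reduced block matrix as the direct integral of the numerical matrices $\begin{bmatrix} f(\lambda)&g(\lambda)\\ g(\lambda)&h(\lambda)\end{bmatrix}$ over $\mathrm{spec}(C)$ and noting that such an integral is positive precisely when its integrand is positive $E$-almost everywhere; this again reduces positivity to $fh\ge g^2$ and handles any degeneracy of $h$ automatically. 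I would carry the Lemma~\ref{lem0} argument in the main line, since it matches the tools just developed, and mention the spectral-integral route as a safeguard.
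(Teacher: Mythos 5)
Your proposal is correct, and its core machinery coincides with the paper's: reduce \eqref{cata} by the congruence with $\mathrm{diag}(A^{1/2},A^{1/2})$ to the block matrix in $f(C),g(C),h(C)$, where $C=A^{-1/2}BA^{-1/2}$, and then apply the Schur-complement criterion of Lemma \ref{lem0} together with commutativity and the functional calculus.

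Where you genuinely depart from the paper is in the converse direction, and your version is the more careful one. The paper runs a single chain of equivalences, asserting that \eqref{fgh} (for all $t\geq 0$) holds \emph{if and only if} $f(C)\geq g(C)h(C)^{-1}g(C)$ for the fixed operator $C$; but for a fixed $C$ the functional calculus only yields $f(t)h(t)\geq g(t)^2$ for $t$ in the spectrum of $C$, not on all of $[0,\infty)$, so the paper's biconditional is literally valid only once \eqref{cata} is read as quantified over all strictly positive $A,B$. Your specialization $A=I$, $B=tI$, which collapses \eqref{cata} to positivity of the numerical matrix $\left[\begin{smallmatrix} f(t)&g(t)\\ g(t)&h(t)\end{smallmatrix}\right]$, supplies exactly the argument needed to close that gap. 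You also justify the invertibility of $h(C)$ (strict positivity of representing functions on $(0,\infty)$, via monotonicity plus analyticity; concavity of operator monotone functions would do as well), which is a hypothesis of Lemma \ref{lem0} that the paper invokes silently. Both refinements are compatible with the paper's argument and could be inserted into it essentially verbatim; your spectral-integral fallback is sound but unnecessary once the invertibility point is settled.
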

\begin{proof}
Using the functional calculus for the strictly positive operator
$A^{\frac{-1}{2}}BA^{\frac{-1}{2}}$ we observe that \eqref{fgh} is valid if and only if
\begin{eqnarray}\label{1}
f\left(A^{\frac{-1}{2}}BA^{\frac{-1}{2}}\right) \geq g\left(A^{\frac{-1}{2}}BA^{\frac{-1}{2}}\right) h\left(A^{\frac{-1}{2}}BA^{\frac{-1}{2}}\right)^{-1}g\left(A^{\frac{-1}{2}}BA^{\frac{-1}{2}}\right) \,.
\end{eqnarray}
Applying Lemma \ref{lem0}, we see that \eqref{1} holds if and only if
\begin{eqnarray}\label{11}
\left[\begin{array}{cc}f\left(A^{\frac{-1}{2}}BA^{\frac{-1}{2}}\right) & g\left(A^{\frac{-1}{2}}BA^{\frac{-1}{2}}\right)\\g\left(A^{\frac{-1}{2}}BA^{\frac{-1}{2}}\right)& h\left(A^{\frac{-1}{2}}BA^{\frac{-1}{2}}\right)\end{array}\right] \geq 0\,.
\end{eqnarray}
Clearly \eqref{11} is equivalent to
\begin{eqnarray*}
\left[\begin{array}{cc}A^{\frac{1}{2}}&0\\0&A^{\frac{1}{2}}\end{array}\right]
\left[\begin{array}{cc}f\left(A^{\frac{-1}{2}}BA^{\frac{-1}{2}}\right) & g\left(A^{\frac{-1}{2}}BA^{\frac{-1}{2}}\right)\\g\left(A^{\frac{-1}{2}}BA^{\frac{-1}{2}}\right)& h\left(A^{\frac{-1}{2}}BA^{\frac{-1}{2}}\right)\end{array}\right]
\left[\begin{array}{cc}A^{\frac{1}{2}}&0\\0&A^{\frac{1}{2}}\end{array}\right] \geq 0\,,
\end{eqnarray*}
or
\begin{eqnarray*}
\left[\begin{array}{cc}A^{\frac{1}{2}}f\left(A^{\frac{-1}{2}}BA^{\frac{-1}{2}}\right)A^{\frac{1}{2}} & A^{\frac{1}{2}}g\left(A^{\frac{-1}{2}}BA^{\frac{-1}{2}}\right)A^{\frac{1}{2}}\\
A^{\frac{1}{2}}g\left(A^{\frac{-1}{2}}BA^{\frac{-1}{2}}\right)A^{\frac{1}{2}}&
A^{\frac{1}{2}}h\left(A^{\frac{-1}{2}}BA^{\frac{-1}{2}}\right)A^{\frac{1}{2}}
\end{array}\right] \geq 0\,,
\end{eqnarray*}
which is \eqref{cata}.
\end{proof}

\begin{corollary} Let $A, B$ be strictly positive operators, Then
$$A \sharp B \leq A \ell B \# A \mathcal{I} B\,.$$
\end{corollary}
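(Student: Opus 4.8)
The plan is to read the right-hand side as an operator geometric mean and to exploit its extremal characterization together with Theorem \ref{main}. Recall that for strictly positive operators $P,Q$ the geometric mean $P\sharp Q$ is the largest self-adjoint operator $T$ for which $\left[\begin{array}{cc}P&T\\T&Q\end{array}\right]\geq 0$ (this is standard, see \cite{BHA}, and fits the block-matrix framework of Lemma \ref{lem0}). Setting $P=A\ell B$ and $Q=A\mathcal{I}B$, which are strictly positive since $A,B$ are, it therefore suffices to show that the self-adjoint operator $T=A\sharp B$ makes the corresponding block matrix positive; the maximality then yields $A\sharp B\leq (A\ell B)\sharp(A\mathcal{I}B)$, which is exactly the claim.

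Next I would invoke Theorem \ref{main} with the assignment $\sigma=\ell$, $\tau=\sharp$, $\rho=\mathcal{I}$, whose representing functions are $f(t)=L(1,t)=\tfrac{t-1}{\log t}$, $g(t)=t^{1/2}$ and $h(t)=I(1,t)$, respectively. The theorem converts the desired positivity into a scalar condition:
$$
\left[\begin{array}{cc} A\ell B & A\sharp B \\ A\sharp B & A\mathcal{I}B \end{array}\right]\geq 0
\quad\Longleftrightarrow\quad
L(1,t)\,I(1,t)\geq g(t)^2=t=G(1,t)^2 \qquad (t\geq 0).
$$
Thus the whole corollary is reduced to the single scalar inequality $G(1,t)^2\leq L(1,t)\,I(1,t)$.

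Finally I would verify this scalar inequality, which is in fact immediate. It factors through the two elementary mean comparisons $G\leq L$ and $G\leq I$, both already established in the paper: the former follows from \eqref{msm1} (equivalently Theorem \ref{t2.2} at $t=1/2$), and the latter from Theorem \ref{29} at $t=1/2$. Since these are inequalities between nonnegative reals, multiplying $G(1,t)\leq L(1,t)$ by $G(1,t)\leq I(1,t)$ gives $G(1,t)^2\leq L(1,t)\,I(1,t)$ at once, completing the argument.

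The only genuinely non-routine step is the first: recognizing that the target inequality is an instance of the extremal (maximality) property of the operator geometric mean, so that Theorem \ref{main} becomes the right tool with the specific correspondence $\sigma=\ell,\ \tau=\sharp,\ \rho=\mathcal{I}$. Once this identification is fixed, the reduction to $G^2\leq LI$ is automatic and its verification is trivial. The one point worth double-checking is that $A\sharp B$ is self-adjoint — which it is, being an operator mean of positive operators — since this is precisely what lets the maximality characterization deliver the inequality rather than mere positivity of the block matrix.
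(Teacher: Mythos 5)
Your proposal is correct and takes essentially the same route as the paper: apply Theorem \ref{main} with $\sigma=\ell$, $\tau=\sharp$, $\rho=\mathcal{I}$ to reduce everything to the scalar inequality $G(1,t)^2\leq L(1,t)I(1,t)$, and then pass from positivity of the block matrix to the conclusion via the maximal characterization of the operator geometric mean. You merely spell out the two steps the paper leaves implicit — the extremal property of $\sharp$ and the verification of $G^2\leq LI$ from $G\leq L$ and $G\leq I$ — so there is no substantive difference.
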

\begin{proof}
It is easy to see that $G(1,x)^2 \leq L(1,x)I(1,x)$. Considering the appropriate representation functions in Theorem \ref{main}, we reach the desired inequality.
\end{proof}

\end{document}